\tikzset{node distance=1.8cm, auto}
\theoremstyle{definition}
\newtheorem{theorem}{Theorem}[section]
\newtheorem{proposition}[theorem]{Proposition}
\newtheorem{corollary}[theorem]{Corollary}
\newtheorem{definition}[theorem]{Definition}
\newtheorem{example}[theorem]{Example}
\newtheorem{lemma}[theorem]{Lemma}
\newtheorem{remark}[theorem]{Remark}
\newtheorem{algorithm}[theorem]{Algorithm}
\DeclareMathOperator\Tr{Tr}
\DeclareMathOperator\End{End}
\newcommand{\bF}{\mathbb{F}}
\title{\textsc{On Newton's identities in positive characteristic}}
\author{Sjoerd de Vries}
\date{}
\begin{document}
\maketitle

\begin{abstract}
    \noindent Newton's identities provide a way to express elementary symmetric polynomials in terms of power polynomials over fields of characteristic zero. In this article, we study the failure of this relation in positive characteristic and what can be recovered. In particular, we show how one can write the elementary symmetric polynomials as rational functions in the power polynomials over any commutative unital ring.
\end{abstract}

\textbf{Keywords --- }Symmetric polynomials, Newton's identities

\tableofcontents

\section{Introduction}
The elementary symmetric polynomials, resp.~power polynomials, in $n$ variables are defined as 
\begin{align}\label{e_k}
e_k(x_1,\ldots,x_n) &= \sum_{1 \leq i_1 < \ldots < i_k \leq n} x_{i_1}x_{i_2}\cdots x_{i_k};\\  \label{p_k}
p_k(x_1,\ldots,x_n) &= \sum_{i=1}^n x_i^k.
\end{align}
In the following, we write $e_k$ (resp.~$p_k$) for the elementary symmetric (resp.~power) polynomials in $n$ variables, where $n$ is some fixed number which should be clear from the context. 
Equation~\eqref{e_k} holds for $k \geq 1$, and $e_0$ is defined to be~$1$. Equation~\eqref{p_k} holds for $k \geq 0$, so that $p_0(x_1,\ldots,x_n) = n$. In particular, $e_k = 0$ for $k > n$, but $p_k \neq 0$ for all~$k \geq 1$. For an integer partition~$\lambda$, which we will write in the form  $\lambda= (k_1,\ldots,k_l)$ with $k_1 \geq \ldots \geq k_l \geq 1$, we put  $e_\lambda := e_{k_1}\cdots e_{k_l}$ and $p_{\lambda} := p_{k_1}\cdots p_{k_l}$.\\

The importance of the elementary symmetric polynomials is made evident by the following classical result (cf.~Theorem~\ref{fundthmR}):
\begin{theorem}\label{fundthm}
Let $K$ be any field. Then the $K$-algebra of symmetric polynomials in $n$ variables is equal to $K[e_1,\ldots,e_n]$.
\end{theorem}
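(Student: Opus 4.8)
The plan is to establish the two inclusions separately. The inclusion of $K[e_1,\ldots,e_n]$ into the algebra of symmetric polynomials is immediate: each $e_k$ is visibly invariant under every permutation of $x_1,\ldots,x_n$, since the sum \eqref{e_k} ranges over all $k$-element subsets of indices, and the symmetric polynomials form a $K$-subalgebra of $K[x_1,\ldots,x_n]$; hence every polynomial expression in the $e_k$ is symmetric. All the work is in the reverse inclusion.

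For that, I would use the lexicographic order on monomials: write $x^a := x_1^{a_1}\cdots x_n^{a_n}$ for a tuple $a = (a_1,\ldots,a_n)$ of nonnegative integers, and declare $x^a \succ x^b$ when $a_i > b_i$ at the smallest index $i$ with $a_i \neq b_i$. For nonzero $f$ let $M(f)$ denote its $\succ$-largest monomial. Two facts are needed. First, if $f$ is symmetric and $M(f) = x^a$, then $a_1 \geq a_2 \geq \cdots \geq a_n$: were some $a_i < a_{i+1}$, the transposition swapping $x_i$ and $x_{i+1}$ would carry $x^a$ to a strictly $\succ$-larger monomial still occurring in $f$, contradicting maximality. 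Second, $M$ is multiplicative, $M(fg) = M(f)M(g)$, and $M(e_k) = x_1 x_2 \cdots x_k$; consequently, for any weakly decreasing tuple $a$, the polynomial
\[
  g_a \;:=\; e_1^{\,a_1-a_2}\, e_2^{\,a_2-a_3}\cdots e_{n-1}^{\,a_{n-1}-a_n}\, e_n^{\,a_n}
\]
is symmetric with $M(g_a) = x^a$ and leading coefficient $1$.

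Given these, the reduction step runs as follows. Let $f$ be a nonzero symmetric polynomial of total degree $d$, let $c \in K$ be the coefficient of $M(f) = x^a$ in $f$, and set $f' := f - c\,g_a$. Then $f'$ is symmetric, has total degree at most $d$, and either vanishes or satisfies $M(f') \prec M(f)$. Since there are only finitely many monomials of degree at most $d$ in $n$ variables, repeatedly applying this step must terminate with the zero polynomial after finitely many iterations; collecting the terms $c\,g_a$ subtracted along the way exhibits $f$ as a $K$-linear combination of monomials in $e_1,\ldots,e_n$, so $f \in K[e_1,\ldots,e_n]$.

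The one delicate point --- and the single place the argument can fail if treated carelessly --- is termination: the lexicographic order on the set of all monomials is not a well-order, so one cannot simply "induct on $M(f)$". What rescues the argument is that the reduction step never increases total degree, confining the whole process to the finite set of monomials of degree at most $\deg f$, on which $\succ$ is a well-order. I note in passing that nothing here refers to $\operatorname{char} K$, which is exactly why the $e_k$ remain a generating set in positive characteristic even though the Newton identities relating them to the $p_k$ degenerate there.
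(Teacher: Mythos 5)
Your proof is correct, but it follows a genuinely different route from the paper. The paper does not argue directly: Theorem \ref{fundthm} is deduced from the ring-theoretic version (Theorem \ref{fundthmR}), which is proved by quoting Macdonald's result over $\mathbb Z$ and base-changing, $R[e_1,\ldots,e_n] = R \otimes \mathbb Z[x_1,\ldots,x_n]^{S_n} = R[x_1,\ldots,x_n]^{S_n}$, with algebraic independence also imported from Macdonald. You instead give the classical self-contained argument: lexicographic leading monomials, the observation that the leading exponent of a symmetric polynomial is weakly decreasing, the monic ``Gauss polynomials'' $g_a = e_1^{a_1-a_2}\cdots e_n^{a_n}$ with $M(g_a)=x^a$, and a terminating reduction. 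Your approach buys a fully elementary, characteristic-free proof that in fact works verbatim over any commutative ring (all leading coefficients in play are $1$), so it would also yield the generation half of Theorem \ref{fundthmR}; what it does not give is the algebraic-independence statement, which the paper's citation supplies and which is used later (e.g.\ in Proposition \ref{chain} and the transcendence-degree corollary) --- though independence is not part of the statement you were asked to prove, so this is not a gap. One small inaccuracy: your ``delicate point'' is a non-issue, since the lexicographic order on monomials in finitely many variables \emph{is} a well-order (induct on $n$, or invoke Dickson's lemma), so one may induct on $M(f)$ directly; your degree-bounded finiteness argument is nevertheless a valid alternative way to see termination, so the proof stands as written.
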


The power polynomials are related to the elementary symmetric polynomials via Newton's identities \cite[p.23]{mcdon}:
\begin{equation}\label{newt_eq}
ke_k= \sum_{i=1}^k (-1)^{i-1}e_{k-i}p_i.
\end{equation}

An immediate corollary of Theorem~\ref{fundthm} and Newton's identities is the following:

\begin{theorem}\label{classical}
    Let $K$ be a field of characteristic zero. Then $K[e_1,\ldots,e_n] = K[p_1,\ldots,p_n]$.
\end{theorem}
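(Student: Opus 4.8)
The plan is to establish the two inclusions $K[p_1,\ldots,p_n] \subseteq K[e_1,\ldots,e_n]$ and $K[e_1,\ldots,e_n] \subseteq K[p_1,\ldots,p_n]$ separately; combined they give the asserted equality.

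For the inclusion $K[p_1,\ldots,p_n] \subseteq K[e_1,\ldots,e_n]$ I would simply note that each $p_k$ is a symmetric polynomial, hence lies in $K[e_1,\ldots,e_n]$ by Theorem \ref{fundthm}; this direction requires no hypothesis on the characteristic. Alternatively, and more in the spirit of what follows, one can rearrange \eqref{newt_eq} for $1 \le k \le n$ to express $p_k$ in terms of $e_1,\ldots,e_k$ and $p_1,\ldots,p_{k-1}$ — observe that the coefficient of $p_k$ on the right-hand side is $\pm 1$, so no division is needed — and then induct on $k$, the inductive hypothesis ensuring $p_1,\ldots,p_{k-1} \in K[e_1,\ldots,e_n]$.

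For the reverse inclusion $K[e_1,\ldots,e_n] \subseteq K[p_1,\ldots,p_n]$ I would invoke \eqref{problem}: for each $1 \le k \le n$ the displayed determinant is a polynomial in $p_1,\ldots,p_k$ with integer coefficients, and since $\operatorname{char} K = 0$ the scalar $1/k!$ lies in $K$, so $e_k \in K[p_1,\ldots,p_k] \subseteq K[p_1,\ldots,p_n]$. Equivalently, one can bypass \eqref{problem} and induct directly on Newton's identities \eqref{newt_eq}: solving for $e_k$ gives $e_k = \tfrac{1}{k}\sum_{i=1}^k (-1)^{i-1} e_{k-i} p_i$, where $1/k \in K$ by the characteristic assumption and each $e_{k-i}$ with $i \ge 1$ already lies in $K[p_1,\ldots,p_n]$ by the inductive hypothesis (base case $e_0 = 1$).

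I expect no real obstacle: the statement is a formal consequence of the material already set up. The single point that must not be glossed over is that the second inclusion genuinely uses $\operatorname{char} K = 0$, since it requires dividing by $k$ (equivalently by $k!$) — precisely the obstruction that the remainder of the paper is devoted to circumventing in positive characteristic.
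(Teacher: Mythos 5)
Your proof is correct and matches the paper's approach: the paper presents this theorem as an immediate corollary of Theorem \ref{fundthm} (giving $p_k \in K[e_1,\ldots,e_n]$) and equation \eqref{problem} (giving $e_k \in K[p_1,\ldots,p_n]$ since $1/k!$ exists in characteristic zero), which is exactly your argument, with your inductive use of \eqref{newt_eq} as an equivalent variant.
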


On the other hand, suppose $K$ has characteristic $r>0$ (we avoid using~$p$ to avoid confusion with the power polynomials). Then we have algebraic relations
    \begin{equation}\label{freshman}
    p_{kr} = x_1^{kr} + \ldots + x_n^{kr} = (x_1^k + \ldots + x_n^k)^r = p_k^r
    \end{equation}
    for any integer $k \geq 1$. Thus, contrary to Theorem~\ref{classical}, it is not true that $K[e_1,\ldots,e_n] = K[p_1,\ldots,p_n]$ when $n\geq r$, because $K[e_1,\ldots,e_n]$ has transcendence degree~$n$ while $K[p_1,\ldots,p_n]$ has transcendence degree at most $n-\lfloor n/r \rfloor$.
    However, one may wonder if the set of \emph{all} power polynomials still generates the ring of symmetric polynomials over fields of positive characteristic.\\

This question has a negative answer, as the following example shows. Suppose we are working with symmetric polynomials in two variables over~$\mathbb F_2$. Then $p_k(1,1) = 1^k+1^k = 0$ for all $k \geq 0$. If $e_2 = x_1x_2$ could be written as an algebraic combination of $\{p_k\}_{k \in \mathbb N}$, i.e.~as a finite sum over integer partitions
\begin{equation}\label{false}
e_2 = c_0 + \sum_{\lambda}c_\lambda p_\lambda
\end{equation}
with $c_0, c_{\lambda} \in \mathbb F_2$, then evaluating both sides at $(1,1)$ gives $c_0 = 1$, whereas evaluating at $(0,0)$ gives $c_0 = 0$. Hence no such relation~\eqref{false} can exist.\\

However, notice that in the above case, one can write $e_2$ in two variables in terms of power polynomials as
\begin{equation}\label{mod2ex}
    e_2 = x_1x_2 = \frac{x_1^2 x_2 + x_1x_2^2}{x_1+x_2} = \frac{(x_1^3 + x_1^2 x_2 + x_1x_2^2 + x_2^3) + x_1^3 + x_2^3}{x_1+x_2}=\frac{p_1^3+p_3}{p_1}.
\end{equation}
A natural question is then whether it is true that the elementary symmetric polynomials can always be obtained as rational functions in the power polynomials. This question has a positive answer, as shown by Sch\"onhage \cite[Theorem 2]{schonhage}:

\begin{theorem}\label{mainthm0}
    Let $K$ be any field. Then $K(e_1,\ldots,e_n) = K(p_1,p_2,\ldots)$.
\end{theorem}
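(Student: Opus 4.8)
The inclusion $K(p_1,p_2,\ldots)\subseteq K(e_1,\ldots,e_n)$ is immediate from Theorem~\ref{fundthm}, as every $p_k$ is a symmetric polynomial. So the content is the reverse inclusion, and it suffices to prove that $e_l\in L:=K(p_1,p_2,\ldots)$ for every $l$. The plan is to exhibit a finite system of $K$-linear equations satisfied by $e_1,\ldots,e_n$, with coefficients in $L$ and with invertible coefficient matrix; Cramer's rule then places each $e_l$ in $L$.

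The equations I will use are Newton's identities past degree $n$: for every $m>n$,
\[
\sum_{l=0}^{n}(-1)^{l}e_{l}\,p_{m-l}=0 .
\]
In contrast to \eqref{problem} (and to the mechanism behind Theorem~\ref{classical}), these involve no division and hold over an arbitrary field. The cleanest justification: each $x_i$ is a root of $f(T)=\prod_{j=1}^n(T-x_j)=\sum_{l=0}^n(-1)^l e_l\,T^{n-l}$, so $x_i^{m-n}f(x_i)=0$; summing the relations $\sum_{l=0}^n(-1)^l e_l\,x_i^{m-l}=0$ over $i$ gives the displayed identity. (Equivalently it falls out of \eqref{newt_eq}: for $m>n$ the left-hand side $m\,e_m$ vanishes since $e_m=0$.)

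Take $e_0=1$ and the $n$ instances $m=n+1,\dots,2n$. The system reads $\sum_{l=1}^{n}(-1)^{l}p_{m-l}\,e_{l}=-p_{m}$, i.e.\ $A\cdot(e_1,\dots,e_n)^{\top}=(-p_{n+1},\dots,-p_{2n})^{\top}$ with $A=\bigl((-1)^{l}p_{m-l}\bigr)_{n+1\le m\le 2n,\,1\le l\le n}\in L^{n\times n}$ (note $1\le m-l\le 2n-1$, so $p_0$ never appears). Everything hinges on $\det A\ne 0$. After extracting the sign $(-1)^l$ from column $l$ and reversing the column order, $\det A$ becomes, up to an overall sign, the determinant of the $n\times n$ matrix $\bigl(p_{a+b+1}\bigr)_{0\le a,b\le n-1}$; and since $p_{a+b+1}=\sum_{i} x_i\cdot x_i^{a}\,x_i^{b}$, this matrix factors as $V^{\top}DV$, where $V=(x_i^{a})_{1\le i\le n,\,0\le a\le n-1}$ is a Vandermonde matrix and $D=\operatorname{diag}(x_1,\dots,x_n)$. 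Hence
\[
\det A=\pm(\det V)^{2}\det D=\pm\,e_n\!\!\prod_{1\le i<j\le n}(x_i-x_j)^{2},
\]
which is nonzero in $K(x_1,\dots,x_n)$ because the coordinate functions $x_i$ are pairwise distinct and nonzero. Therefore $A$ is invertible over $L$, and Cramer's rule writes each $e_l$ as a ratio of determinants with entries among $p_1,\dots,p_{2n}$, hence in $L$. This yields $K(e_1,\dots,e_n)\subseteq L$ and finishes the proof.

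The one real obstacle is the non-vanishing of $\det A$, and this is exactly the point where ``any field'' matters: $\det A$ equals, up to sign, $e_n$ times the discriminant $\prod_{i<j}(x_i-x_j)^2$, both of which survive in every characteristic. As a check, for $n=2$ the recipe gives $e_2=(p_2p_4-p_3^2)/(p_1p_3-p_2^2)$, which over $\mathbb{F}_2$ simplifies to $(p_1^3+p_3)/p_1$, recovering \eqref{mod2ex}.
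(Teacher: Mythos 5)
Your argument is correct, and it is a genuinely streamlined variant of the paper's proof rather than a reproduction of it. Both proofs start from the same source of relations, namely Newton's identities beyond degree $n$ (the paper's \eqref{newt_eq1}), and both ultimately rest on the Vandermonde factorization $\det\bigl(p_{a+b+1}\bigr)_{0\le a,b\le n-1}=e_n\prod_{i<j}(x_i-x_j)^2$, which is exactly part 1 of Proposition~\ref{prop1}. The difference is in how the linear algebra is organized: you take the full square system given by $m=n+1,\ldots,2n$ and solve for \emph{all} of $e_1,\ldots,e_n$ at once by Cramer's rule, so the only determinant you must control is $\det P_{n,n}$, which has the closed form above. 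The paper instead proceeds by induction on $k$, using the classical identity \eqref{newt_eq} when $k$ is invertible and, when it is not, only the truncated system $N=n+1,\ldots,2n-k+1$, whose coefficient matrix is the smaller Hankel matrix $P_{n-k+1,n}$; showing that \emph{its} determinant is nonzero (indeed a non-zero-divisor) is what forces the extra machinery of Lemma~\ref{lemma1} and part 2 of Proposition~\ref{prop1}. What your route buys is brevity: no induction, no decomposition of $\det P_{d,n}$ into sub-Hankel determinants, and a one-line nonvanishing argument valid in every characteristic. What the paper's route buys is the sharper and more general statement of Theorem~\ref{mainthm}: it works over an arbitrary commutative ring (where Lemma~\ref{lemma2} is needed to upgrade ``nonzero'' to ``non-zero-divisor''), it only uses $p_1,\ldots,p_{2n+1-r_0}$ rather than your $p_1,\ldots,p_{2n}$, it only ever inverts expressions in $p_1,\ldots,p_{2n-2r_0+1}$ (Remark~\ref{remk}), and the smaller denominators $\det P_{n-k+1,n}$ give the evaluation criterion exploited in the trace/characteristic-polynomial corollary. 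Your $n=2$ sanity check is consistent with this: your formula $e_2=(p_2p_4-p_3^2)/(p_1p_3-p_2^2)$ has a larger denominator than the paper's $e_2^{(2)}=(p_1p_2-p_3)/p_1$, though both reduce to \eqref{mod2ex} modulo $2$.
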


In this paper, we give a new proof which holds over any base ring:

\begin{theorem}[Theorem~\ref{mainthm}]\label{mainthmintro}
    Let $R$ be a commutative ring with unity. Then \[
    Q(R[e_1,\ldots,e_n]) = Q(R[p_1,p_2,\ldots]),
    \]
    where $Q(-)$ denotes the total ring of fractions.
\end{theorem}

The proof is based on a simple algorithm to express elementary symmetric polynomials as rational functions of power polynomials.\\

We also study several related questions. In \S\ref{sec3}, we study
the subalgebra generated by the power polynomials over fields of positive characteristic and show that it is not finitely generated. In \S\ref{sec2}, we prove Theorem~\ref{mainthmintro} and discuss applications. In \S\ref{sec:dvr}, we consider the case of a discretely valued field, where we relate the valuations of power sums to the valuations of the individual elements.\\

Although symmetric polynomials are central objects in algebra and representation theory, it seems that their theory in positive characteristic has thus far been somewhat neglected. There is literature dealing with the problem of recovering the roots of a polynomial from its power sums in positive characteristic, as this leads to fast algorithms for several computational problems, see~\cite{bostan-FSS}. The focus in this literature has been on complexity theory rather than algebraic results.

\section{The subalgebra generated by power polynomials}\label{sec3}

Let $R$ be any commutative ring with unity. We first note that Theorem~\ref{fundthm} continues to hold:

\begin{theorem}[Fundamental Theorem on Symmetric Polynomials]\label{fundthmR}
    Let $R$ be a commutative ring with unity. Then $R[e_1,\ldots,e_n] \subset R[x_1,\ldots,x_n]$ is equal to the ring of symmetric polynomials over $R$, and the set $\{e_1,\ldots,e_n\}$ is algebraically independent.
\end{theorem}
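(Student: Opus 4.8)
The plan is to establish the two assertions separately, in both cases working with the lexicographic monomial order on $R[x_1,\ldots,x_n]$ (with $x_1 > x_2 > \cdots > x_n$) and exploiting the fact that each $e_k$ is \emph{monic} for this order: the lexicographically largest monomial of $e_k$ is $x_1x_2\cdots x_k$, with coefficient $1$. Since the leading monomial of a product is the product of the leading monomials whenever the leading coefficients are units, it follows that for $\alpha=(\alpha_1,\ldots,\alpha_n)\in\mathbb{Z}_{\geq0}^n$, writing $e^\alpha:=e_1^{\alpha_1}\cdots e_n^{\alpha_n}$, the leading monomial of $e^\alpha$ is $x^{\phi(\alpha)}$ with coefficient $1$, where $\phi(\alpha):=\bigl(\sum_{j\geq1}\alpha_j,\ \sum_{j\geq2}\alpha_j,\ \ldots,\ \alpha_n\bigr)$. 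The combinatorial heart of the matter is that $\phi$ is a bijection from $\mathbb{Z}_{\geq0}^n$ onto the set of partitions with at most $n$ parts, with inverse sending a partition $a=(a_1\geq\cdots\geq a_n\geq0)$ to $(a_1-a_2,\ldots,a_{n-1}-a_n,a_n)$, and that $e^\alpha$ is homogeneous of degree $\sum_i i\alpha_i=|\phi(\alpha)|$.

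\textbf{Inclusion.} The inclusion $R[e_1,\ldots,e_n]\subseteq R[x_1,\ldots,x_n]^{S_n}$ is immediate. Conversely, let $f\neq0$ be symmetric, with leading monomial $x^a$ and leading coefficient $c\in R$. Applying the transposition of two adjacent variables (which fixes $f$) forces the exponent vector $a$ to be weakly decreasing, i.e. a partition; set $\alpha=\phi^{-1}(a)$. Then $f-c\,e^\alpha$ is symmetric, has no monomial of total degree exceeding $\deg f$ (as $e^\alpha$ is homogeneous of degree $|a|\leq\deg f$), and has strictly smaller leading monomial, or is zero, since subtracting $c\,e^\alpha$ exactly cancels the $x^a$-term and introduces nothing lexicographically larger. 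As the monomials of bounded total degree form a finite, hence well-ordered, set, iterating this step terminates and writes $f$ as a polynomial in the $e_k$ over $R$; this also yields the promised algorithm. No division or regularity hypothesis on $R$ enters here — monicity of the $e_k$ does all the work.

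\textbf{Algebraic independence.} Suppose $P=\sum_\alpha c_\alpha y^\alpha\in R[y_1,\ldots,y_n]$ satisfies $P(e_1,\ldots,e_n)=0$; I must show $P=0$. If $P\neq0$, pick among the finitely many $\alpha$ with $c_\alpha\neq0$ one, $\alpha^\ast$, with $\phi(\alpha^\ast)$ lexicographically maximal; this is well-defined since $\phi$ is injective. Every monomial of $e^\alpha$ is $\leq_{\mathrm{lex}}x^{\phi(\alpha)}$, so $x^{\phi(\alpha^\ast)}$ occurs in $e^\alpha$ (for $\alpha$ with $c_\alpha\neq0$) only if $\phi(\alpha)\geq_{\mathrm{lex}}\phi(\alpha^\ast)$, i.e. only for $\alpha=\alpha^\ast$ by maximality and injectivity. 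Hence the coefficient of $x^{\phi(\alpha^\ast)}$ in $\sum_\alpha c_\alpha e^\alpha=P(e_1,\ldots,e_n)$ is $c_{\alpha^\ast}\neq0$, a contradiction.

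\textbf{Main obstacle.} Over a field, algebraic independence is typically obtained via transcendence degree or a specialization argument, neither of which transfers verbatim to a general commutative ring; the role of the argument above is to replace them by the purely combinatorial injectivity of $\phi$ together with the monicity of the $e_k$. (Alternatively one can run the classical induction that sets $x_n=0$ and divides the remainder by $e_n$; there the delicate point is that $e_n=x_1\cdots x_n$ must be a non-zero-divisor in $R[x_1,\ldots,x_n]$ and the quotient must stay symmetric — which follows from Lemma \ref{lemma2}, since $e_n$ has a unit coefficient.) I expect the only genuinely fiddly part to be the bookkeeping in the inclusion step, namely verifying termination and the preservation of symmetry, which is handled by the finiteness of each graded piece.
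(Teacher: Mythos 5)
Your proof is correct, but it takes a different route from the paper. The paper disposes of the theorem in a few lines by quoting Macdonald's result over $\mathbb{Z}$ and then base-changing: it writes $R[e_1,\ldots,e_n] = R \otimes_{\mathbb Z} \mathbb Z[x_1,\ldots,x_n]^{S_n} = R[x_1,\ldots,x_n]^{S_n}$ and asserts that the algebraic-independence proof in the reference goes through over any $R$; note that the middle equality itself silently uses that forming $S_n$-invariants commutes with $\otimes_{\mathbb Z}R$ (true because the invariant ring is a free $\mathbb{Z}$-module on orbit sums of monomials, but not a formal triviality), and the subsequent remark in the paper stresses that independence over $R$ does \emph{not} follow formally from independence over $\mathbb{Z}$. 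You instead give the classical self-contained argument: the lexicographic reduction algorithm for the inclusion, and the injectivity of $\alpha\mapsto\phi(\alpha)$ together with monicity of the $e_k$ (so leading coefficients are units and survive over any ring) for independence. What your approach buys is exactly what the paper outsources: a proof valid over an arbitrary commutative ring with no base-change subtlety and no external citation, plus an explicit algorithm for rewriting a symmetric polynomial in the $e_k$; what the paper's approach buys is brevity. Two small remarks: your appeal to ``leading monomial of a product is the product of leading monomials'' is justified precisely because the leading coefficients are $1$, and it is good that you said so, since over a ring with zero divisors this can fail otherwise; and your parenthetical alternative via setting $x_n=0$ and dividing by $e_n$ is also fine, with the non-zero-divisor property of $e_n$ indeed deducible from Lemma \ref{lemma2} because $e_n$ has a coefficient equal to $1$.
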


\begin{proof}
    Let $S_n$ denote the symmetric group on $n$ letters, which acts on the variables $x_1,\ldots,x_n$ by permutation. By \cite[I.2.4]{mcdon}, the theorem holds with $R = \mathbb Z$. In particular, if we let $S_n$ act trivially on $R$, we have
    \[
    R[e_1,\ldots,e_n] = R \otimes \mathbb Z[e_1,\ldots,e_n] = R \otimes \mathbb Z[x_1,\ldots,x_n]^{S_n} = R[x_1,\ldots,x_n]^{S_n}.
    \]
    This implies the first part of the theorem. The algebraic independence follows from \cite[I.2.3]{mcdon}, which is stated for~$\mathbb Z$ but whose proof works over any ring~$R$.
\end{proof}

\begin{remark}
    The fact that $\{e_1,\ldots,e_n\}$ is algebraically independent over any ring $R$ does not follow directly from the statement for $R = \mathbb Z$ without using properties of the elementary symmetric polynomials. For instance, as shown in \cite[Example 1.4]{transdeg}, there are rings $R$ and polynomials $f_1,\ldots,f_m \in \mathbb Z[x_1,\ldots,x_n]$ such that 
    \[
    \text{trdeg}_R\{f_1,\ldots,f_m\} < \text{trdeg}_{\mathbb Z}\{f_1,\ldots,f_m\},
    \]
    even when none of the coefficients of the $f_i$ are zero divisors in $R$. One example of this phenomenon is that for a prime number $r$,
    \[
    \text{trdeg}_{\mathbb F_r}\{p_1,p_r\} = 1
    \]
    for any number of variables $n$.
\end{remark}

By applying Newton's identities inductively, one obtains the following expression for the power polynomials in terms of the elementary symmetric polynomials:
    \begin{equation}\label{totalnewton}
    p_m = (-1)^m \sum_{t_1 + 2t_2 + \ldots + mt_m = m} c_{t_1,\ldots,t_m}\prod_{i=1}^m (-e_i)^{t_i},
    \end{equation}
where
\begin{equation*}
    c_{t_1,\ldots,t_m}=\frac{m \cdot (t_1+t_2 + \ldots + t_m - 1)!}{t_1!t_2!\cdots t_m!}.
\end{equation*}   
    Since the coefficients $c_{t_1,\ldots,t_m}$ are integers, this is a formula that holds over~$R$. However, it is not clear from the formula~\eqref{totalnewton} how to judge whether a given symmetric polynomial is generated by power polynomials.
    
    \begin{definition}
        Let $R$ be a commutative ring with unity. We denote by
        \[
        R[p_\infty] := \bigcup_{k \geq 1} R[p_1,\ldots,p_k] \subseteq R[e_1,\ldots,e_n]
        \]
        the subalgebra generated by power polynomials.
    \end{definition}
    
A natural question is thus whether one can describe the subalgebra $R[p_\infty] \subseteq R[e_1,\ldots,e_n]$ in terms of elementary symmetric polynomials.
In this section, we consider this question when $R = K$ is a field of characteristic $r > 0$. We work in the $K$-basis $\{ e_\lambda \ | \ \lambda \text{ is an integer partition}\}$ of $K[e_1,\ldots,e_n]$, and when we write ``monomial", we mean a monomial expressed in terms of elementary symmetric polynomials; that is, a constant multiple of some~$e_\lambda$. We assume that $n \geq r$, as otherwise the situation is identical to the characteristic~0 case.

\begin{proposition}\label{improvement}
 Denote by $E$ the $K$-algebra generated by all monomials $e_\lambda$ such that at least one part of $\lambda$ is coprime to $r$. Then
 \[
 K[p_\infty] \subseteq E \subsetneq K[e_1,\ldots,e_n].
 \]
\end{proposition}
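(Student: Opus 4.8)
The plan is to prove that $p_m \in E$ for every $m \geq 1$; since $E$ is a $K$-subalgebra, this immediately gives $K[p_1,p_2,\ldots] \subseteq E$. First I would record a concrete description of $E$ as a vector space. A product of monomials $e_{\lambda^{(1)}}\cdots e_{\lambda^{(s)}}$ equals $e_\mu$, where $\mu$ is the multiset union of the $\lambda^{(i)}$, and if each $\lambda^{(i)}$ has a part coprime to $r$ then so does $\mu$; conversely each such $e_\mu$ is itself a generator. Hence $E$ is precisely the $K$-span of $1$ together with those $e_\mu$ for which $\mu$ is a nonempty partition having at least one part coprime to $r$. Because $\{e_\mu\}$ (with $1 = e_\emptyset$) is a $K$-basis of $K[e_1,\ldots,e_n]$, membership $p_m \in E$ can be checked coefficient by coefficient: it holds if and only if, in the expansion of $p_m$ in the $e_\mu$-basis, every $e_\mu$ with $\mu \neq \emptyset$ all of whose parts are divisible by $r$ occurs with coefficient $0$ in $K$.

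The second step uses the explicit expansion \eqref{totalnewton}. There the coefficient of $e_\mu$ in $p_m$, for $\mu = (1^{t_1}2^{t_2}\cdots m^{t_m})$, is $\pm c_{t_1,\ldots,t_m}$ with $c_{t_1,\ldots,t_m} = m(T-1)!/(t_1!\cdots t_m!)$ and $T = \sum_i t_i$. So it suffices to show $r \mid c_{t_1,\ldots,t_m}$ whenever the $t_i$ are not all zero and $t_i = 0$ for every $i$ with $r \nmid i$. In that case, writing $s_j := t_{rj}$, the weight condition $\sum_i i t_i = m$ reads $m = r m'$ with $m' := \sum_j j s_j \geq 1$, while $T = \sum_j s_j$ and $t_1!\cdots t_m! = \prod_j s_j!$. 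Therefore
\begin{equation*}
c_{t_1,\ldots,t_m} = \frac{r\,m'\,(T-1)!}{\prod_j s_j!} = r\cdot\frac{m'\,(T-1)!}{\prod_j s_j!},
\end{equation*}
and it remains to note that the second factor is an integer: it is exactly the coefficient $c_{s_1,\ldots,s_{m'}}$ attached to the partition $(1^{s_1}2^{s_2}\cdots)$ of $m'$ in the analogous expansion \eqref{totalnewton} of $p_{m'}$, which is integral, or one can see it directly by writing $m' = \sum_j j s_j$ and expanding $m'(T-1)!/\prod_j s_j! = \sum_j j\binom{T-1}{\ldots,s_j-1,\ldots}$ as a sum of multinomial coefficients. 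Hence $c_{t_1,\ldots,t_m} \equiv 0 \pmod r$, so it maps to $0$ in $K$, and $p_m \in E$ follows.

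I do not expect a real obstruction here: in essence, a ``bad'' monomial can occur in $p_m$ only when $r \mid m$, and then dividing both the degree $m$ and the partition $\mu$ through by $r$ lays bare an explicit factor of $r$ in its coefficient. The only points that need a little care are the bookkeeping that identifies $E$ with the span of the ``good'' $e_\mu$ --- so that the conclusion can be read off coefficientwise --- and verifying that the residual factor $m'(T-1)!/\prod_j s_j!$ really is an integer rather than merely a rational number; both are dispatched above.
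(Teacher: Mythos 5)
Your proof is correct, but it takes a different route from the paper. The paper's proof is a two-line homogeneity argument: since $p_{kr}=p_k^r$, the algebra $K[p_1,p_2,\ldots]$ is generated by the $p_k$ with $r\nmid k$, and such a $p_k$ is homogeneous of degree $k$ not divisible by $r$, whereas any ``bad'' monomial $e_\lambda$ (all parts divisible by $r$) has degree a multiple of $r$; hence no bad monomial can occur in $p_k$, so $p_k\in E$, and one concludes because $E$ is a subalgebra. You instead prove the stronger, explicit statement that $p_m\in E$ for \emph{every} $m$ (including $r\mid m$, where the paper routes through Frobenius), by showing that in the expansion \eqref{totalnewton} every coefficient attached to a partition with all parts divisible by $r$ is itself divisible by $r$: dividing the partition and the degree by $r$ exhibits the factorization $c_{t_1,\ldots,t_m}=r\,c_{s_1,\ldots,s_{m'}}$, with integrality of the residual factor coming from its identification as a Newton coefficient for $p_{m'}$ (or from your multinomial expansion). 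Your bookkeeping identifying $E$ with the span of $1$ and the good $e_\mu$, and the reduction of membership to a coefficientwise check via the basis $\{e_\mu\}$, are both sound and are also implicit in the paper's argument. What the paper's approach buys is brevity and independence from the explicit formula \eqref{totalnewton}; what yours buys is a sharper arithmetic byproduct (an exact $r$-divisibility of the relevant Newton coefficients, not just their vanishing in $K$) and a proof that does not need to first reduce to the case $r\nmid k$. Both arguments use that $r$ is prime, yours through the equivalence of ``coprime to $r$'' with ``not divisible by $r$'', the paper's additionally through $p_{kr}=p_k^r$.
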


\begin{proof}
Since $p_{kr} = p_k^r$ for all~$k$, we see that $K[p_\infty]$ is generated by power polynomials $p_k$ with $(k,r) = 1$. Any such power polynomial lies in~$E$. Indeed, any monomial $e_\lambda$ corresponding to a partition $\lambda$ with all parts divisible by~$r$ has degree a multiple of~$r$, while $p_k$ is homogeneous of degree~$k$, which by assumption is not divisible by~$r$.
The first inclusion now follows because $E$ is a subalgebra of $K[e_1,\ldots,e_n]$. To see that $E$ is a proper subalgebra, note that e.g.~$e_r \notin E$.
\end{proof}

\begin{example}
    One can check by hand that for $K = \mathbb F_2$ and $n = 3$, one cannot write $e_3 = x_1x_2x_3$ as an algebraic combination of power polynomials. More generally, one can show that the containment $K[p_\infty] \subseteq E$ is an equality if and only if $n = r$.
\end{example}

\begin{proposition}\label{chain}
    Let $K$ be a field of characteristic $r > 0$. Fix an integer~$k$ not divisible by~$r$.
    Then 
    \[
    K[p_1,\ldots,p_{k-1}] \subsetneq K[p_1,\ldots,p_{k}].
    \]
\end{proposition}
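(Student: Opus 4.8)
I would assume for contradiction that $p_k\in K[p_1,\dots,p_{k-1}]$ and then pin down a single coefficient in the basis $\{e_\lambda\}$ of the symmetric polynomials that is a unit of $K$ on $p_k$ but vanishes on all of $K[p_1,\dots,p_{k-1}]$. Transcendence-degree considerations dispose of small $k$ (once $\{p_j:r\nmid j,\ j\le k\}$ is seen to be algebraically independent, which holds while there are at most $n$ of them), but they are useless once $\operatorname{trdeg}K[p_1,\dots,p_k]$ has saturated at $n$; so the real content is a combinatorial separation argument valid for all $k$.

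\textbf{Reductions.} Since $K[x_1,\dots,x_n]$ is graded by degree and $p_i$ is homogeneous of degree $i$, the degree-$k$ part of a relation $p_k=g(p_1,\dots,p_{k-1})$ reads $p_k=\sum_{\lambda\vdash k,\ \lambda_1\le k-1}c_\lambda p_\lambda$ with $c_\lambda\in K$; note every $\lambda$ occurring has $\ell(\lambda)\ge2$. Setting $x_{r+1}=\dots=x_n=0$ sends each $p_j$ to $p_j(x_1,\dots,x_r)$, so such a relation descends to $r$ variables; hence it suffices to treat $n=r$. I therefore work in $\Lambda_r:=K[e_1,\dots,e_r]$ with its basis $\{e_\mu:\mu_1\le r\}$, using throughout that $e_\mu e_{\mu'}=e_{\mu\cup\mu'}$, so that $\ell(\cdot)$ is additive on products.

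\textbf{The detector.} Put $q=\lceil k/r\rceil$ and $\rho=k-r(q-1)=k\bmod r\in\{1,\dots,r-1\}$ (here $r\nmid k$ is essential), and set $\nu_0=(r^{\,q-1},\rho)$ — a partition with at most two distinct part sizes, in which the part $\rho$ occurs with multiplicity $1$. By formula \eqref{totalnewton} (with $t_r=q-1$, $t_\rho=1$) the coefficient of $e_{\nu_0}$ in $p_k$ equals $\pm k$, a unit of $K$. The claim is that $e_{\nu_0}$ occurs in \emph{no} product $p_\lambda$ with $\ell(\lambda)\ge2$; comparing coefficients of $e_{\nu_0}$ on the two sides of the displayed relation then gives $\pm k=0$, the desired contradiction. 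To prove the claim, suppose $e_{\nu_0}$ occurs in $p_\lambda=p_{j_1}\cdots p_{j_l}$ with $l\ge2$, so $\nu_0=\mu^{(1)}\cup\dots\cup\mu^{(l)}$ where $e_{\mu^{(i)}}$ is a monomial of $p_{j_i}$, and $q=\sum_i\ell(\mu^{(i)})$. For those $i$ with $r\mid j_i$ one has $p_{j_i}=(p_{j_i/r^{v}})^{r^{v}}$ where $v=v_r(j_i)\ge1$ is the $r$-adic valuation, so by the freshman's dream $p_{j_i}=\sum_\mu c_\mu^{\,r^v}e_\mu^{\,r^v}$; every monomial $e_\mu^{\,r^v}$ has $e$-length $r^v\ell(\mu)\ge r^v\lceil j_i/r^{v+1}\rceil$, and a short estimate gives $\ell(\mu^{(i)})\ge\lceil j_i/r\rceil+1$ for such $i$, while $\ell(\mu^{(i)})\ge\lceil j_i/r\rceil$ always. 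Summing and using $\sum_i\lceil j_i/r\rceil\ge\lceil k/r\rceil=q$ forces $r\nmid j_i$ for every $i$, and forces each $\mu^{(i)}$ to be a partition of $j_i$ into exactly $\lceil j_i/r\rceil$ parts, each drawn from the two part sizes $\{r,\rho\}$ of $\nu_0$. Writing $\mu^{(i)}=(r^{x_i},\rho^{y_i})$, additivity of the multiplicity of $\rho$ gives $\sum_i y_i=1$, so all but one $\mu^{(i)}$ equal $(r^{x_i})$, whence $r\mid j_i=rx_i$ for those $i$ — contradicting $r\nmid j_i$, since $l\ge2$ provides at least one such $i$. (For $k<r$ the same scheme degenerates to the trivial remark that $e_k$ has length $1$ while every monomial of a product $p_\lambda$ with $\ell(\lambda)\ge2$ has length $\ge2$.)

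\textbf{Main obstacle.} The essential step is choosing $\nu_0$: once $k$ is large, transcendence degree and coarser invariants fail, and what makes the argument run is the rigidity of a partition with only two distinct part sizes combined with the characteristic-$r$ identity $p_{mr}=p_m^r$, which confines the monomials of $p_j$ (for $r\mid j$) to ``$r$-thick'' shapes of large $e$-length. The only genuinely delicate bookkeeping is the length estimate for those $p_j$ with $r\mid j$ and the verification that the decomposition $\nu_0=\mu^{(1)}\cup\dots\cup\mu^{(l)}$ cannot evade it; one should also record the (immediate) check from the formula for $c_{t_1,\dots,t_m}$ that the coefficient of $e_{\nu_0}$ in $p_k$ is $\pm k$ and not some multiple of $r$.
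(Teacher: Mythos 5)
Your proposal is correct and follows essentially the same route as the paper: you use the same detector monomial $e_r^{q-1}e_\rho$ (the paper's $e_r^a e_b$ with $k=ar+b$), extract its coefficient $\pm k$ from formula \eqref{totalnewton}, and derive the contradiction from the fact that the part $\rho$ has multiplicity one, so all other factors would have to contribute pure powers of $e_r$. The only difference is technical: you rule out factors $p_{j}$ with $r\mid j$ by a length estimate after specializing to $n=r$ variables, whereas the paper first rewrites $p_{jr}=p_j^r$ so that only indices coprime to $r$ occur and then invokes the degree-mod-$r$ argument of Proposition \ref{improvement}; both steps are valid.
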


\begin{proof}
Write $k = ar + b$ for integers $a \geq 0$ and $0 < b < r$. Applying the identity~\eqref{totalnewton} with $m = k$, $t_b = 1$, and $t_r = a$ tells us that $p_k$ contains the monomial $e_r^ae_b$ with non-zero coefficient.\\

    We claim that there are no elements in $K[p_1,\ldots,p_{k-1}]$ which, when written in terms of the elementary symmetric polynomials, contain the monomial $e_r^ae_b$; this will finish the proof. Suppose for contradiction that $f$ is a counterexample. Consider an expression
    \begin{equation*}
    f = \sum_{v \in \mathbb N^{k-1}} c_vp_1^{v_1}\ldots p_{k-1}^{v_{k-1}},
    \end{equation*}
    with $v_i = 0$ for all $r \mid i$; we can achieve this because $p_{jr} = p_j^r$. By the algebraic independence of the elementary symmetric polynomials, the monomial $e_r^ae_b$ in $f$ must be a product of monomials of the form $e_r^\alpha e_b^\beta$, with $\alpha \leq a$ and $\beta \leq 1$, occurring in $p_1^{v_1},\ldots,p_{k-1}^{v-1}$. Since there are no monomials of this form with $\beta = 0$ by Proposition~\ref{improvement}, we see that the entire monomial $e_r^ae_b$ must occur in some~$p_m$. This is a contradiction for degree reasons: we obtain $m = ra + b = k > m$. This finishes the proof.
\end{proof}

\begin{corollary}
    If $n \geq r$, the subalgebra $K[p_\infty] \subset K[e_1,\ldots,e_n]$ is not finitely generated.
\end{corollary}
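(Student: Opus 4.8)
The plan is to deduce this directly from the infinite strictly ascending chain of subalgebras furnished by Proposition~\ref{chain}. I would argue by contradiction: suppose $K[p_1,p_2,\ldots]$ is generated as a $K$-algebra by finitely many elements $g_1,\ldots,g_s$. Each $g_i$ lies in $K[p_1,p_2,\ldots]$, hence is a polynomial expression involving only finitely many of the power polynomials. Letting $N$ be the largest index occurring among all of the $g_i$, we get $g_1,\ldots,g_s \in K[p_1,\ldots,p_N]$, and therefore $K[p_1,p_2,\ldots] = K[g_1,\ldots,g_s] \subseteq K[p_1,\ldots,p_N]$. The reverse inclusion is obvious, so $K[p_1,p_2,\ldots] = K[p_1,\ldots,p_N]$.

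Next I would use the fact that there are infinitely many positive integers coprime to $r$ (for instance those of the form $\ell r + 1$), so that we may choose some $k > N$ with $\gcd(k,r) = 1$. Since $N \leq k-1$, the equality just established gives $p_k \in K[p_1,p_2,\ldots] = K[p_1,\ldots,p_N] \subseteq K[p_1,\ldots,p_{k-1}]$, and hence $K[p_1,\ldots,p_{k-1}] = K[p_1,\ldots,p_k]$. This contradicts Proposition~\ref{chain}, which (using $\gcd(k,r)=1$ and $n \geq r$) asserts precisely that this inclusion is strict. Therefore no finite generating set can exist.

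There is essentially no difficult step here: the entire content is packaged into Proposition~\ref{chain}, and the corollary is the routine observation that a finitely generated subalgebra of $K[p_1,p_2,\ldots]$ would have to be contained in $K[p_1,\ldots,p_N]$ for some $N$, thereby truncating the chain. The only point worth flagging explicitly is the hypothesis $n \geq r$, which is needed for Proposition~\ref{chain} to apply; for $n < r$ one instead has $K[p_1,p_2,\ldots] = K[p_1,\ldots,p_n]$, which is of course finitely generated, so the hypothesis cannot be dropped.
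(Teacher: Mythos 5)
Your proof is correct and takes essentially the same route as the paper: both deduce the corollary from Proposition~\ref{chain}. The only cosmetic difference is that you truncate to $K[p_1,\ldots,p_N]$ via the largest index of a power polynomial appearing in the hypothetical generators and then invoke the proposition's statement, whereas the paper bounds the degrees of the generators and re-runs the monomial argument from the proposition's proof; both reductions are valid.
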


\begin{proof}
    Given any finite set $S = \{f_1,\ldots,f_m\}$ of algebraic combinations of power polynomials in $n$ variables, we have $K[S] \subseteq K[p_1,\ldots,p_N]$ for $N = \max \{\deg f_i\}$. Thus Proposition~\ref{chain} implies that $K[S]$ is properly contained in $K[p_\infty]$.
\end{proof}

\begin{example}
    Let $K = \mathbb F_2$ and $n = 2$. Then we have for any $N = 2m+1$:
    \[
    \mathbb F_2[p_1,p_3,\ldots,p_N] = \mathbb F_2[e_1,e_1e_2,e_1e_2^2,\ldots,e_1e_2^m].
    \]
    In particular, $K[e_1,\ldots,e_n]$ is not the integral closure of $K[p_\infty]$.
\end{example}

\section{Elementary symmetric polynomials in terms of power polynomials}\label{sec2}

Let $R$ be a commutative ring with unity. In this section, we give an effective proof that any symmetric polynomial over $R$ can be expressed as a rational function in the power polynomials. We then consider the case where $R$ contains a field of positive characteristic, where some improvements can be made.

\subsection{The case of a general ring}\label{subsec1}
Let $R$ be any commutative ring with unity. For $k \geq 0$, we view $e_k$, $p_k$ as elements of $R[x_1,\ldots,x_n]$. We first prove some preliminary results needed in the proof of Theorem~\ref{mainthm}.\\

We fix the following notation. For a polynomial $f \in R[x_1,\ldots,x_n]$ and $k \in \mathbb N$, let
\[
f(\hat{x}_k) := f(x_1,\ldots,\hat{x}_k,\ldots,x_n) := f(x_1,\ldots,0,\ldots,x_n).
\]
In particular, $f(\hat{x}_k) = f$ if $k > n$. 

\begin{lemma}\label{lemma1}
    Let $M = (m_{i,j})$ be a $d \times d$ matrix with entries given by power polynomials: $m_{i,j} = p_{r_{i,j}}(x_1,\ldots,x_{n_{i,j}})$ for some integers $r_{i,j}, n_{i,j} \geq 1$.
    Define 
    \begin{equation*}
    \epsilon_{i,j,k} := \begin{cases} x_k^{r_{i,j}} & k \leq n_{i,j}; \\ 0 & \text{otherwise}.\end{cases}
    \end{equation*}
    Then for any $1 \leq i \leq d$ and any $k \geq 1$, we have
    \begin{equation*}
    \det M = \det \begin{pmatrix}
        m_{1,1} & m_{1,2} & \cdots & m_{1,d} \\
        \vdots & \vdots & \ & \vdots \\
        m_{{i-1},1} & m_{{i-1},2} & \cdots & m_{{i-1},d} \\
        \epsilon_{i,1,k} & \epsilon_{i,2,k} & \cdots & \epsilon_{i,d,k}\\
        m_{{i+1},1} & m_{{i+1},2} & \cdots & m_{{i+1},d} \\
        \vdots & \vdots & \ & \vdots \\
        m_{d,1} & m_{d,2} & \cdots & m_{d,d}
    \end{pmatrix}
    + \det \begin{pmatrix}
    m_{1,1} & m_{1,2} & \cdots & m_{1,d} \\
        \vdots & \vdots & \ & \vdots \\
        m_{{i-1},1} & m_{{i-1},2} & \cdots & m_{{i-1},d}\\
        m_{i,1}(\hat{x}_k) & m_{i,2}(\hat{x}_k) & \cdots & m_{i,d}(\hat{x}_k) \\
        m_{{i+1},1} & m_{{i+1},2} & \cdots & m_{{i+1},d} \\
        \vdots & \vdots & \ & \vdots \\
        m_{d,1} & m_{d,2} & \cdots & m_{d,d}
    \end{pmatrix}.
    \end{equation*}
\end{lemma}
\begin{proof}
    This follows from the $i$-th row expansion of $\det M$, since $m_{i,j} = \epsilon_{i,j,k} + m_{i,j}(\hat{x}_k)$.
\end{proof}

\begin{proposition}\label{prop1}
    Let $d,n \in \mathbb Z_{\geq 1}$ and consider the Hankel matrix
    \begin{equation}\label{P_n} P_{d,n}(x_1,\ldots,x_n) =
    \begin{pmatrix}
        p_1 & p_2 & p_3 & \cdots & p_d \\
        p_2 & p_3 & p_4 & \cdots & p_{d+1} \\
        p_3 & p_4 & p_5 & \cdots & p_{d+2}\\
        \vdots & \vdots & \vdots & \iddots & \vdots \\
        p_d & p_{d+1} & p_{d+2} & \cdots & p_{2d-1}
    \end{pmatrix}.
    \end{equation}
    Then the following hold:
    \begin{enumerate}
    \item For any $d$, we have
    \begin{equation*}
        \det P_{d,d} = e_d \prod_{1 \leq i < j \leq d}(x_i - x_j)^2.
    \end{equation*}
    \item For any $d$ and $n$, we have
    \begin{equation*}
        \det P_{d,n} = \sum_{1 \leq i_1 < i_2 < \ldots < i_d \leq n}\det P_{d,d}(x_{i_1},x_{i_2},\ldots,x_{i_d}).
    \end{equation*}
    \end{enumerate}
    In particular, $\det P_{d,n} \in R[x_1,\ldots,x_n]$ is non-zero if and only if $d \leq n$.
\end{proposition}

\begin{proof}  
    \textbf{Proof of 1.} If we write
    \begin{equation*}
    P_{d,d} =
    \begin{pmatrix}
        1 & 1 & \cdots & 1 \\
        x_1 & x_2 & \cdots & x_d \\
        x_1^2 & x_2^2 & \cdots & x_d^2 \\
        \vdots & \vdots & \ddots & \vdots \\
        x_1^{d-1} & x_2^{d-1} & \cdots & x_d^{d-1}
    \end{pmatrix}
    \begin{pmatrix}
        x_1 & x_1^2 & \cdots & x_1^d \\
        x_2 & x_2^2 & \cdots & x_2^d \\
        x_3 & x_3^2 & \cdots & x_3^d \\
        \vdots & \vdots & \ddots & \vdots \\
        x_d & x_d^2 & \cdots & x_d^d
    \end{pmatrix},
    \end{equation*}
    then the first matrix is a standard Vandermonde matrix, which has determinant 
    \begin{equation*}\prod_{1 \leq i < j \leq d}(x_j-x_i).
    \end{equation*}
    The second matrix is of the same form after dividing row~$i$ by~$x_i$ and taking the transpose; hence it has determinant $e_d \prod_{1 \leq i < j \leq d}(x_j-x_i)$. The result follows.\\

    \textbf{Proof of 2.} Fix $d$. The idea is to decompose $\det P_{d,n}$ completely using Lemma~\ref{lemma1}. First consider the case $n=d$. By applying Lemma~\ref{lemma1} with $i = k=1$, we obtain
    \begin{equation*}
    \det P_{d,d} = \det \begin{pmatrix}
        x_1 & x_1^2 & \cdots & x_1^d \\
        p_2 & p_3 & \cdots & p_{d+1} \\
        \vdots & \vdots & \ddots & \vdots \\
         p_d & p_{d+1} & \cdots & p_{2d-1}
    \end{pmatrix}
    +\det \begin{pmatrix}
        p_1(\hat{x}_1) & p_2(\hat{x}_1) & \cdots & p_d(\hat{x}_1) \\
        p_2 & p_3 & \cdots & p_{d+1}\\
        \vdots & \vdots & \ddots & \vdots \\
        p_d & p_{d+1} & \cdots & p_{2d-1}
    \end{pmatrix}.
    \end{equation*}
    In the first matrix, one can replace each power polynomial $p_j$ with $p_j(\hat{x}_1)$, by replacing row~$i$ with row $i - (x_1^{i-1}$ times row 1) for all $i = 2,\ldots,d$. The second determinant we can reduce again by applying Lemma~\ref{lemma1} with $i=1$ and $k=2$. Continuing this process gives the expression
    \begin{equation*}
        \det P_{d,d} = \sum_{k=1}^d \det \begin{pmatrix}
        x_k & x_k^2 & \cdots & x_k^d \\
        p_2(\hat{x}_k) & p_3(\hat{x}_k) & \cdots & p_{d+1}(\hat{x}_k) \\
        \vdots & \vdots & \ddots & \vdots \\
         p_d(\hat{x}_k) & p_{d+1}(\hat{x}_k) & \cdots & p_{2d-1}(\hat{x}_k)
    \end{pmatrix}.
    \end{equation*}
    By applying Lemma~\ref{lemma1} inductively to the other rows, one obtains the following expression for~$\det P_{d,d}$:
    \begin{equation}\label{pdd}
        \det P_{d,d} = \sum_{\sigma \in S_d} \det \begin{pmatrix}
            x_{\sigma(1)} & x_{\sigma(1)}^2 & \cdots & x_{\sigma(1)}^d \\
            x_{\sigma(2)}^2 & x_{\sigma(2)}^3 & \cdots & x_{\sigma(2)}^{d+1} \\
            \vdots & \vdots & \ddots & \vdots \\
            x_{\sigma(d)}^{d} & x_{\sigma(d)}^{d+1} & \cdots & x_{\sigma(d)}^{2d-1}
        \end{pmatrix}
    \end{equation}
    where $S_d$ denotes the symmetric group on $d$ letters.\\
    
    For arbitrary $n$, one can use the same method to express $\det P_{d,n}$ as
    \begin{equation*}
    \det P_{d,n} = \sum_{1 \leq i_1,\ldots,i_d \leq n} \det \begin{pmatrix}
        x_{i_1} & x_{i_1}^2 & \cdots & x_{i_1}^d \\
            x_{i_2}^2 & x_{i_2}^3 & \cdots & x_{i_2}^{d+1} \\
            \vdots & \vdots & \ddots & \vdots \\
            x_{i_d}^d & x_{i_d}^{d+1} & \cdots & x_{i_d}^{2d-1}
    \end{pmatrix}
    \end{equation*}
    where such a determinant is automatically zero if there is a repetition in the indices (since one row will be a multiple of another). By Equation~\eqref{pdd}, we may write this determinant as
    \begin{equation*}
    \det P_{d,n} = \sum_{1 \leq i_1 < i_2 < \ldots < i_d \leq n}\det P_{d,d}(x_{i_1},x_{i_2},\ldots,x_{i_d}).
    \end{equation*}
    When $d \leq n$, the above expression is non-zero because it contains the monomial $x_1x_2^{3}\ldots x_d^{2d-1}$. This finishes the proof.
\end{proof}

\begin{remark}
    The method used in the proof of part~2 of Proposition~\ref{prop1} works for any matrix $M = (m_{i,j})$ consisting of power polynomials in $n$ variables such that $\deg m_{i,j} - \deg m_{i+1,j}$ is independent of~$j$ for all~$i$. In other words, given such a matrix $M$, one can apply Lemma~\ref{lemma1} repeatedly to obtain an expression for $\det M$ in terms of a simpler determinant. 
\end{remark}

In order to state the main theorem over general rings $R$, we fix the following notation.

\begin{definition}
    Let $R$ be a commutative ring with unity and let $\{f_i \hspace{0.3em} | \hspace{0.3em} i \in I\}$ be a set of polynomials in $R[x_1,\ldots,x_n]$. Let $S$ denote the multiplicative system of non-zero divisors in $R[f_i \hspace{0.3em} | \hspace{0.3em} i \in I] \subset R[x_1,\ldots,x_n]$. We denote by 
    \begin{equation*}
        R(f_i \hspace{0.3em} | \hspace{0.3em} i \in I) := S^{-1}R[f_i \hspace{0.3em} | \hspace{0.3em} i \in I] = Q(R[f_i \hspace{0.3em} | \hspace{0.3em} i \in I])
    \end{equation*} the total ring of fractions of $R[f_i \hspace{0.3em} | \hspace{0.3em} i \in I]$.
\end{definition}

\begin{lemma}[{\cite[Exercise 1.2]{atimcdon}}] \label{lemma2}
    Let $R$ be a commutative ring with unity, and let $f$ be a zero divisor in $R[x_1,\ldots,x_n]$. Then there exists a zero divisor $r \in R$ such that $rm=0$ for any monomial $m$ in~$f$.
\end{lemma}

\begin{theorem}\label{mainthm}
    Let $R$ be any commutative unital ring not containing $\mathbb Q$, and let $r_0 \geq 2$ be the smallest integer which is not invertible in~$R$. Fix the number of variables $n \geq r_0$. Then 
    \[
    R(e_1,\ldots,e_n) = R(p_1,\ldots,p_{2n+1-r_0}).
    \]
\end{theorem}

\begin{proof}
The power polynomials are symmetric polynomials, so $p_k \in R(e_1,\ldots,e_n)$ for all~$k$ by Theorem~\ref{fundthmR}. We prove the converse direction of the theorem by providing an algorithm to compute $e_1,\ldots,e_n$ as rational functions in the power polynomials, and then prove that this algorithm always works.

\begin{algorithm}\hfill \label{algR} \\
\textsc{Input:} Power polynomials $p_1,p_2,\ldots,p_{2n+1-r_0}$.\\
\textsc{Output:} Expressions for $e_1,e_2,\ldots,e_n$ as rational functions of power polynomials.
    \begin{enumerate}
        \item For $k < r_0$, apply the usual Newton identities
\begin{equation*}
ke_k= \sum_{i=1}^k (-1)^{i-1}e_{k-i}p_i
\end{equation*}
recursively to obtain expressions for $e_k$ in terms of power polynomials.
\item Solve the system of equations
\begin{equation}\label{matrixeq}
\begin{pmatrix}
p_1 & -p_2 & \cdots & (-1)^{n} p_{n+1} \\
-p_2 & p_3 & \cdots & (-1)^{n+1}p_{n+2} \\
\vdots & \vdots & \ddots & \vdots \\
(-1)^{n-r_0}p_{n+1-r_0} & \cdots & \cdots & (-1)^{k}p_{2n+1-r_0}
\end{pmatrix}
\begin{pmatrix}
    e_{n} \\ e_{n-1} \\ \vdots \\ e_1 \\ 1
\end{pmatrix}
=
\begin{pmatrix}
    0 \\ 0 \\ \vdots \\ 0 \\ 0
\end{pmatrix}
\end{equation}
by reducing it to the form 
\begin{equation}\label{reducedsystem}
\begin{pmatrix}
1 & 0 & \cdots & 0 & c_{1,n+2-r_0} & \cdots & c_{1,n+1} \\
0 & 1 & \cdots & 0 & c_{2,n+2-r_0} & \cdots & c_{2,n+1} \\
\vdots & \vdots & \ddots & \vdots & \vdots & \ddots & \vdots\\
0 & 0 & \cdots & 1 & c_{n+1-r_0,n+2-r_0} & \cdots & c_{n+1-r_0,n+1}
\end{pmatrix}
\begin{pmatrix}
    e_n \\ e_{n-1} \\ \vdots \\ e_1 \\ 1
\end{pmatrix}
=
\begin{pmatrix}
    0 \\ 0 \\ \vdots \\ 0 \\ 0
\end{pmatrix}
\end{equation}
by performing elementary row operations over $R(p_1,\ldots,p_{2n+1-r_0})$.
\item For $r_0 \leq k \leq n$, this gives the expressions
\[
e_k = -\sum_{i=0}^{r_0 - 1}c_{n+1-k,n+1-i}e_i
\]
with each $c_{a,b} \in R(p_1,\ldots,p_{2n+1-r_0})$.
    \end{enumerate}
\end{algorithm}

Step~2 of the algorithm needs to be justified. Since $e_N=0$ for $N>n$, the Newton identities imply that
\begin{equation}\label{newt_eq1}
\sum_{i=N-n}^N (-1)^{i-1} e_{N-i}p_i = 0
\end{equation}
for all $N>n$. Applying \eqref{newt_eq1} with $N = n+1,\ldots,2n+1-r_0$ yields the system of equations~\eqref{matrixeq}.\\

The $(n+1-r_0) \times (n+1)$ matrix in this equation is of the form $(\tilde{P}|*)$ for a square matrix $\tilde{P}$. Multiplying each second row of $\tilde{P}$ by $-1$ and then multiplying each second column by $-1$, we see that 
\begin{equation*}
\det \tilde{P} = \det P_{n+1-r_0,n},
\end{equation*}
with notation from~\eqref{P_n}. This determinant is not a zero-divisor in $R[x_1,\ldots,x_n]$ by Proposition~\ref{prop1} and Lemma~\ref{lemma2}, since $\det P_{d,n}$ for $d \leq n$ always contains the monomial $x_1x_2^3\ldots x_d^{2d-1}$ with coefficient $1 \in R^\times$. Hence the matrix in~\eqref{matrixeq} can be reduced to the form $(I_{n+1-r_0}|*)$ by performing elementary row operations over $R(p_1,\ldots,p_{2n+1-r_0})$. This finishes the proof.
\end{proof}

\begin{remark}\label{remk}
    The reduction of the system \eqref{matrixeq} to the system \eqref{reducedsystem} only requires division by $\det P_{n+1-r_0,n}$, which we know explicitly by Proposition~\ref{prop1}. Thus, one only needs to invert algebraic expressions in the power polynomials $p_i$ for $1 \leq i \leq 2n+1-2r_0$ (not $2n+1-r_0$) to obtain all elementary symmetric polynomials.
Moreover, this observation gives a test as to whether one can compute $e_k(\alpha_1,\ldots,\alpha_n)$ for $k \notin R^\times$ given only the $p_i(\alpha_1,\ldots,\alpha_n)$ for $i \leq 2n+1-k$ and the $e_i(\alpha_1,\ldots,\alpha_n)$ for $i < k$: namely, it suffices that 
    \[
    \det P_{n+1-k,n}(\alpha_1,\ldots,\alpha_n) \in R^\times.
    \]
Indeed, this is the denominator in the expression for $e_k(\alpha_1,\ldots,\alpha_n)$ obtained from Algorithm~\ref{algR}. Moreover, since $e_k$ is a polynomial, the denominator divides the numerator whenever the denominator is non-zero; hence if~$R$ is an integral domain, the denominator need only be non-zero.

For further discussion about the evaluation of the rational expressions, see~\S\ref{subsec:eval}.
\end{remark}

\begin{remark}
    In characteristic~0, the expressions for the elementary symmetric polynomials in terms of power polynomials obtained from Newton's identities are valid for all~$n$; in other words, they are identities of symmetric functions. In contrast, the expressions obtained from Algorithm~\ref{algR} do depend on~$n$.
\end{remark}

\begin{example}
Implementing the algorithm from the proof, we obtain for $r_0=2$:
\begin{align*}
e_2(x_1,x_2) &= \frac{p_1p_2 - p_3}{p_1};\\
e_2(x_1,x_2,x_3) &= \frac{p_1p_2p_3 - p_1^2p_4 - p_2p_4 + p_1p_5}{p_2^2 - p_1p_3};\\
    e_2(x_1,x_2,x_3,x_4) &= \frac{p_1p_3^2p_4 + p_1^2p_4p_5 + p_1p_2^2p_6 + p_2p_4p_5 + p_2p_3p_6 + p_1p_3p_7}{p_3^3 - p_1p_3p_5 - 2p_2p_3p_4 + p_1p_4^2 + p_2^2p_5 } \\
    &\qquad - \frac{p_1p_2p_4^2 + p_1p_2p_3p_5 + p_3^2p_5 + p_1^2p_3p_6 + p_1p_4p_6 + p_2^2p_7}{p_3^3 - p_1p_3p_5 - 2p_2p_3p_4 + p_1p_4^2 + p_2^2p_5 }.
\end{align*}
Note that the expression for $e_2(x_1,x_2)$ reduces to Equation~\eqref{mod2ex} modulo~2, since $p_2 = p_1^2$ in characteristic~2. For $r_0 \in \{2,3\}$, we obtain
\begin{align}
\label{eqmod3}
\begin{split}
    e_3(x_1,x_2,x_3) &= \frac{-p_1p_3 + p_2e_2 + p_4}{p_1};\\
    e_3(x_1,x_2,x_3,x_4) &= \frac{p_1^2p_5 -p_1p_2p_4 - p_1e_2p_4 - p_1p_6 + p_2e_2p_3 + p_2p_5}{p_2^2 - p_1p_3}.
    \end{split}
\end{align}
\end{example}

\begin{remark}
    The complete homogeneous symmetric polynomials are defined by
    \[
    h_k(x_1,\ldots,x_n) = \sum_{1 \leq i_1 \leq i_2 \leq \ldots 
 \leq i_k \leq n} x_{i_1}x_{i_2}\ldots x_{i_k}.
    \]
    One can express the power polynomials in terms of the complete homogeneous symmetric polynomials without problems, but going the other way, there are again denominators. Fortunately, the fact that $\mathbb Z[e_1,e_2,\ldots,e_n] = \mathbb Z[h_1,h_2,\ldots,h_n]$ \cite[I.2.8]{mcdon} in conjunction with Theorem~\ref{mainthm} allows one to compute the~$h_k$ in terms of the power polynomials over any ring.
\end{remark}

\subsection{The case of \texorpdfstring{$\mathbb F_r$}{Fr}-algebras}\label{subsec2}

Fix a prime number $r$, and suppose from now on that $R$ is a commutative $\mathbb F_r$-algebra. We describe an algorithm to compute $e_1,\ldots,e_n$ as rational functions in the power polynomials when $n \geq r$. The algorithm is similar to Algorithm~\ref{algR}, but takes fewer power polynomials as input.\\

The fact that this is possible is due to Sch\"onhage \cite{schonhage}, who considered the case where $R$ is a field of positive characteristic~$r$. We give a new method to achieve Sch\"onhage's result in the spirit of Algorithm~\ref{algR}. It may be useful to compare this section with the previous one to see what improvements can be made when one has more information about the ring $R$.\\

Let $\ell := n + \lfloor (n-1)/(r-1)\rfloor$. By \cite[Theorem 2]{schonhage}, the elementary symmetric polynomials are elements of $R(p_1,p_2,\ldots,p_\ell)$. Note that this is the best possible result: since $p_{rk} = p_k^r$ for each $k \geq 1$, the transcendence degree of $\{p_1,p_2,\ldots,p_m\}$ is at most $m - \lfloor m/r \rfloor$, and $\ell$ is the least integer such that this quantity equals $n$. Since $\ell \leq 2n+1-r$, this gives simpler expressions for the elementary symmetric polynomials than Theorem~\ref{mainthm}.

\begin{algorithm}\label{alg}\hfill \\
\textsc{Input:} Power polynomials $p_1,p_2,\ldots,p_\ell$.\\
\textsc{Output:} Expressions for $e_1,e_2,\ldots,e_n$ as rational functions of power polynomials.
\begin{enumerate} \item Recursively compute $e_1,\ldots,e_{r-1}$ via the Newton identities~\eqref{newt_eq}.
\item Let $\bar{e} = (e_n,e_{n-1},\ldots,e_1,1)^t$. Consider the Newton identities 
\begin{equation}\label{newtfinal}
ke_k - p_1e_{k-1} + p_2e_{k-2} - \ldots + (-1)^kp_k =0
\end{equation}
for all $k$ such that $r < k \leq \ell$ and $(k,r) = 1$. Let $M$ be the $(n-r+1) \times (n+1)$ matrix whose rows consist of the coefficients of~\eqref{newtfinal} for these values of $k$, so that $M\bar{e} = 0$.
\item Diagonalise the leftmost $(n-r+1) \times (n-r+1)$-block of $M$ to express $e_r,\ldots,e_n$ in terms of power polynomials.
\end{enumerate}
\end{algorithm}

\begin{example}Consider the case $r=3, n=7$. Since $\ell = 10$, we need to use the Newton identities for $k \in \{4,5,7,8,10\}$. We obtain the matrix equation
    \[
    \begin{pmatrix}
        0 & 0 & 0 & 1 & -p_1 & p_{2} & -p_3 & p_4 \\
        0 & 0 & 2 & -p_1 & p_2 & -p_3 & p_4 & -p_5 \\
        1 & -p_1 & p_2 & -p_3 & p_4 & -p_5 & p_6 & -p_7 \\
        -p_1 & p_2 & -p_3 & p_4 & -p_5 & p_6 & -p_7 & p_8 \\
        -p_3 & p_4 & -p_5 & p_6 & -p_7 & p_8 & -p_9 & p_{10}
    \end{pmatrix}
    \begin{pmatrix}
        e_7 \\ e_6 \\ e_5 \\e_4 \\ e_3 \\ e_2 \\ e_1 \\ 1
    \end{pmatrix}
    =
    \begin{pmatrix}
 0 \\ 0 \\ 0 \\ 0 \\ 0
    \end{pmatrix}.
    \]
    The leftmost $5 \times 5$-block in the above $5 \times 8$ matrix is invertible. Reducing it to the identity matrix via elementary row operations expresses each $e_3,\ldots,e_7$ as rational functions of power polynomials and $e_2$, which can be obtained from the usual Newton identity $2e_2 = e_1p_1 - p_2$.
\end{example}

\begin{example}
    For $r=3$ and $n = 4$, we have $2n+1-r = 6$ and $\ell = n + \lfloor (n-1)/(r-1) \rfloor = 5$. Algorithm~\ref{alg} applied in this setting gives the expression
    \[
    e_3(x_1,x_2,x_3,x_4) = \frac{p_1^2p_3 - p_1p_2e_2 + p_1p_4 +p_3e_2 + p_5}{p_2 - p_1^2},
    \]
    which is simpler than the expression \eqref{eqmod3} obtained from Algorithm~\ref{algR}.
\end{example}

\begin{remark}
The system of equations in Algorithm~\ref{alg} was also considered by Sch\"onhage, who commented \cite[p.~412]{schonhage}: ``\textit{...it is hard to see how this redundant system should be solved or how to prove its solvability.}" He then proves that $R(e_1,\ldots,e_n) = R(p_1,\ldots,p_\ell)$ in a different way. A posteriori, it follows that the system in question was solvable, but not necessarily in the way described by Algorithm~\ref{alg}. We sketch a proof of the algorithm here. We may and will assume without loss of generality that $R = \bF_r$.\\

Denote by $M(n)$ the leftmost $(n-r+1) \times (n-r+1)$-block of the matrix~$M$ from step~3 of Algorithm~\ref{alg}; if $n < r$, define $M(n)$ to be the empty matrix. Note that $M(r) = (-p_1)$, that 
\[
M(n) = 
\begin{pmatrix}
    0 & \ldots & 0 & 1 & -p_1 \\
    \vdots & \iddots & 2 & -p_1 & p_2 \\ 
    0 & \iddots & \iddots & & \vdots \\
    n-r & -p_1 & p_2 & \ldots & (-1)^{n+1} p_{n-r} \\
    -p_1 & p_2 & -p_3 & \ldots & (-1)^{n}p_{n-r+1}
\end{pmatrix}
\]
for $r+1 \leq n \leq 2r-2$, and that for $n \geq 2r-1$, we have the block decomposition
\[
M(n) = 
\left(
\begin{array}{@{}ccc|cccc|c@{}}
 &  &  & 0 & \ldots & 0 & 1 & -p_1 \\
 & 0 & &   \vdots & \iddots & 2 & -p_1 & p_2 \\ 
 &  & &   0 & \iddots & \iddots & \vdots & \vdots \\
 & & & r-1 & -p_1 & \ldots & -p_{r-2} & p_{r-1} \\ \hline
 & M(n-r) &  & & & * & & * \\ \hline
 & * & & & & * & & (-1)^{\ell - r} p_{\ell - r}
\end{array}
\right),
\]
where any entry in the regions denoted by~$*$ lies in~$R[p_1,\ldots,p_{\ell-r-1}]$. Note that $r \nmid \ell$.\\

We show that $\det M(n) \neq 0$ by induction on~$n$. If $r \leq n \leq 2r-2$, we see by the last row expansion of the determinant that
\[
\det M(n) = D \pm (n-r)! p_{n-r+1},
\]
for some $D \in R[p_1,\ldots,p_{n-r}]$. Since $r \nmid n+1$, Proposition~\ref{chain} implies that $\det M(n) \neq 0$. If $n \geq 2r-1$, we see in the same way that $\det M(n) = D \pm (r-1)!p_{\ell-r} \det M(n-r)$, 
with $D \in R[p_1,\ldots,p_{\ell-r-1}]$ and $\det M(n-r) \neq 0$ by induction. If $\det M(n) = 0$, this yields that $p_{\ell - r} \in R(p_1,\ldots,p_{\ell-r-1})$, but it is easily shown from first principles that $p_{\ell - r}$ is algebraically independent of $\{p_1,\ldots,p_{\ell-r-1}\}$ \cite[Proposition 1]{schonhage}.
\end{remark}

% \begin{remark}Note that the denominators in the expressions of the $e_i$ lie in $R[p_1,\ldots,p_{\ell - r}]$, but we have no explicit description of the denominators like we do for Algorithm~\ref{algR}.
% \end{remark}

\subsection{Evaluating the expressions}\label{subsec:eval}
Suppose one wants to evaluate the expressions for the elementary symmetric polynomials obtained from Algorithm~\ref{algR} or \ref{alg} at an element $(\alpha_1,\ldots,\alpha_n) \in R^n$. Since the expressions are rational functions, this is not always possible. The problem (P) we are concerned with can be formulated in three equivalent ways:

\begin{enumerate}
    \item[(P1)] Given $(p_i(\alpha_1,\ldots,\alpha_n))_{i \geq 1}$, determine $e_k(\alpha_1,\ldots,\alpha_n)$ for $k = 1,\ldots,n$.
    \item[(P2)] Given a polynomial $f \in R[X]$ of degree~$n$ with roots $\alpha_1,\ldots,\alpha_n \in R$, determine the coefficients of~$f$ from the data $(p_i(\alpha_1,\ldots,\alpha_n))_{i \geq 1}$. 
    \item[(P3)] Let $M$ be a free $R$-module of rank~$n$ and let $T \in \End_R(M)$ be a linear operator with $n$ eigenvalues in~$R$. Given $(\Tr(T^i \hspace{0.1em} | \hspace{0.1em} M))_{i \geq 1}$, determine the characteristic polynomial of~$T$.
\end{enumerate}

To see why (P1) is equivalent to~(P2), note that the coefficients of a polynomial are (up to sign) the elementary symmetric polynomials in the roots. To see that (P2) is equivalent to~(P3), note that the eigenvalues of~$T$ are precisely the roots of the characteristic polynomial, and the trace of~$T^i$ is the $i$-th power polynomial in the eigenvalues of~$T$.\\

By the example given in the introduction, it is necessary to have some conditions on $\alpha_1,\ldots,\alpha_n$ in order to guarantee a solution to~(P). The following result from the literature states that if~$R$ is a field, the conditions are as good as one can hope for.

\begin{proposition}\label{prop:recover}
    Let $K$ be a field of characteristic $r > 0$, and let $\alpha_1,\ldots,\alpha_n \in K$. Then one can determine $e_k(\alpha_1,\ldots,\alpha_n)$ for $k = 1,\ldots,n$ from $(p_i(\alpha_1,\ldots,\alpha_n))_{i=1}^{2n}$ if and only if each~$\alpha_j$ occurs with multiplicity less than~$r$.
\end{proposition}

\begin{proof}
    If some $\alpha_j$ is repeated $r$ times, the contribution to $p_i(\alpha_1,\ldots,\alpha_n)$ is $r \alpha_j^i = 0$ for all~$i$. Hence these elements cannot be detected by the power polynomials. For the converse, see~\cite[Prop.~2]{bostan-FSS}.
\end{proof}

\begin{remark}
    One can also use Proposition~\ref{prop:recover} to give a sufficient condition for~(P) to be solvable over reduced rings~$R$, as these can be embedded into a product of fields. For general rings~$R$, Remark~\ref{remk} gives a sufficient condition, but due to the possibility of removable poles this condition is not always necessary.
\end{remark}

\section{Discrete valuation rings}\label{sec:dvr}
In this section, we consider a question about discrete valuation rings in positive characteristic. Let $(K,v)$ be a discretely valued field of characteristic~$r$ such that $v$ is trivial on the prime subfield of~$K$. Given an irreducible polynomial $P \in K[X]$, the valuations of the coefficients of~$P$ are related via the Newton polygon to the valuations of its roots $\alpha_1,\ldots,\alpha_n$. In characteristic zero, combining this with Newton's identities implies that $\min v(\alpha_i) \geq m$ if and only if $v(p_k(\alpha_1,\ldots,\alpha_n)) \geq km$ for $k = 1,\ldots,n$. In positive characteristic, this is no longer true. However, we can salvage the result if we have information about the valuations of \emph{all} power polynomials. We start with a lemma.

\begin{lemma}\label{lem:sums}
    Let $R$ be an integral domain of characteristic~$r$. Let $x_1,\ldots,x_n \in R$ such that there exists some $x_i \neq 0$ in the list which is not repeated a multiple of $r$ times. Then there exists $s \geq 1$ such that $x_1^s + \ldots + x_n^s \neq 0$.
\end{lemma}

\begin{proof}
    Write $S = \{x_1,\ldots,x_n \} \setminus \{0\}$ and let $d := |S| \geq 1$. Let $y_1,\ldots,y_d \in S$ be distinct. For $i = 1,\ldots,d$, write
    \[
    n_i := |\{1 \leq j \leq n \ | \ x_j = y_i \}|.
    \]
    By assumption, some $n_i$ is non-zero in~$R$. We have the following equation:
    \[
    \begin{pmatrix}
        y_1 & y_2 & \cdots & y_d \\
        y_1^2 & y_2^2 & \cdots & y_d^2 \\
        \vdots & \vdots & \ddots & \vdots \\
        y_1^d & y_2^d & \cdots & y_d^d
    \end{pmatrix}
    \begin{pmatrix}
        n_1 \\ n_2 \\ \vdots \\ n_d
    \end{pmatrix}
    =
    \begin{pmatrix}
        p_1(x_1,\ldots,x_n) \\
        p_2(x_1,\ldots,x_n) \\
        \vdots \\
        p_d(x_1,\ldots,x_n)
    \end{pmatrix}.
    \]
    The matrix on the left is invertible over $\text{Frac}(R)$, as it is of Vandermonde type with determinant 
    \[
    e_d(y_1,\ldots,y_n) \prod_{1 \leq i < j \leq d}(y_i - y_j) \neq 0.
    \]
    Hence there is some $1 \leq s \leq d$ such that $p_s(x_1,\ldots,x_n) \neq 0$.
\end{proof}

\begin{proposition}
    Let $(K,v)$ be a discretely valued field of characteristic~$r$ with residue field $\bF_q$. Let $\alpha_1,\ldots,\alpha_n \in K$ and fix $m \in \mathbb Z$. Suppose that some $\alpha_j$ with minimal valuation (i.e., satisfying $v(\alpha_j) = \min v(\alpha_i)$) occurs with multiplicity not divisible by~$r$. Then the following are equivalent:
    \begin{enumerate}
        \item[$(1)$] $\min v(\alpha_i) \geq m$;
        \item[$(2)$] $v(p_k(\alpha_1,\ldots,\alpha_n)) \geq km \ \forall k \geq 1$.
    \end{enumerate}
\end{proposition}

\begin{proof}
    $(1) \implies (2)$ follows directly from the triangle inequality. Conversely, by the previous discussion we may assume $r > 0$. By replacing $K$ with its completion $K_v$, we may assume that $K = \bF_q(\!(\pi)\!)$ for some uniformizer $\pi$ \cite[Thm II.2]{serre_local}. Write $m' := \min v(\alpha_i)$ and assume without loss of generality that
    \[
    v(\alpha_1) = v(\alpha_2) = \ldots = v(\alpha_d) = m' < \infty, \qquad v(\alpha_j) \geq m'+1 \ \text{for} \ j=d+1,\ldots,n.
    \]
    Since $\alpha_1,\ldots,\alpha_d$ satisfy the assumption of Lemma~\ref{lem:sums}, we can find $s \geq 1$ such that 
    \[
    y := \sum_{i=1}^d \alpha_i^s \neq 0.
    \]
    Write $M := v(y) \geq m's$ and $y = \sum_{i \geq M} y_i \pi^i$, so that $y_M \in \bF_q^\times$.\\
    
    For $i = 1,\ldots,d$, write $\alpha_i = \pi^{m'} u_i$ for some $\pi$-adic units $u_i \in \bF_q^\times + \pi\bF_q[\![\pi]\!]$. In particular, $u_i^{(q-1)q^N} \in 1 + \pi^{q^N} \bF_q[\![\pi]\!]$. Hence we have for $q^N > M$,
    \begin{align*}
    p_{(q-1)q^N+s}(\alpha_1,\ldots,\alpha_d) &= \sum_{i=1}^d \pi^{m'(q-1)q^N}u_i^{(q-1)q^N}\alpha_i^{s} \\
    &= \pi^{m'(q-1)q^N}\sum_{i=1}^d \alpha_i^s + \text{h.o.t.} \\
    &= y_M \pi^{m'(q-1)q^N + M} + \text{h.o.t.}
    \end{align*}
    and hence $v(p_{(q-1)q^N + s}(\alpha_1,\ldots,\alpha_d)) = m'(q-1)q^N +M$. In particular, for $N \gg 1$, we have
    \[
    v(p_{(q-1)q^N+s}(\alpha_{d+1},\ldots,\alpha_{n})) \geq (m'+1)((q-1)q^N+s) > v(p_{(q-1)q^N+s}(\alpha_1,\ldots,\alpha_d)),
    \]
    and hence
    \[
    v(p_{(q-1)q^N+s}(\alpha_1,\ldots,\alpha_n)) = m'(q-1)q^N+M.
    \]
    By assumption, we have $v(p_{(q-1)q^N + s}(\alpha_1,\ldots,\alpha_n)) \geq ((q-1)q^N + s)m$. Hence the result follows because
    \[
    \lim_{N \to \infty} \frac{m'(q-1)q^N +M}{(q-1)q^N + s} = m'.
    \]
\end{proof}

\subsection*{Acknowledgements}
I am grateful to Per Alexandersson for helpful discussions and providing a combinatorist's perspective on the contents of this paper. I thank Bruce Sagan and Chris Bowman for helpful comments about the existing literature, and Alin Bostan for his comments and for pointing out the papers \cite{schonhage,kakeya}. I would also like to thank my PhD-advisor Jonas Bergstr\"om and my co-advisor Olof Bergvall. Lastly, I thank the anonymous referee for their careful reading.

\printbibliography

\textsc{Matematiska institutionen, Stockholms universitet, 106 91 Stockholm, Sweden}\\
\textit{E-mail address:} \href{mailto:sjoerd.devries@math.su.se}{\texttt{sjoerd.devries@math.su.se}}
\end{document}